\documentclass[11pt]{amsart}

\usepackage[a4paper,margin=1.15in]{geometry}
\usepackage{amsmath,amssymb,amsthm,amsfonts}
\usepackage{mathrsfs}
\usepackage{mathtools}
\usepackage{enumitem}
\usepackage{hyperref}
\usepackage[nameinlink,capitalise]{cleveref}
\usepackage{array}
\hypersetup{
    colorlinks=true,
    linkcolor=blue,
    citecolor=blue,
    urlcolor=blue
}

\theoremstyle{plain}
\newtheorem{theorem}{Theorem}[section]
\newtheorem{proposition}[theorem]{Proposition}

\theoremstyle{definition}
\newtheorem{definition}[theorem]{Definition}
\newtheorem{example}[theorem]{Example}

\theoremstyle{remark}

\title[Curvature on Some Kähler Toric Manifolds]{Curvature on Some Kähler Toric Manifolds}

\author{Xingluan Wang}
\thanks{E-mail: wxlmath@mail.ustc.edu.cn. School of Mathematical Sciences,
University of Science and Technology of China, Hefei 230026, P. R. China.}

\begin{document}

\begin{abstract}
We use the Guillemin--Abreu formalism to give a unified treatment of Kähler metrics with positive holomorphic bisectional curvature on $\mathbb C^n$ and positive holomorphic sectional curvature on $\mathcal O(-\ell)$ and the Hirzebruch manifolds $M_{n,\ell}$.

\medskip
\noindent\textbf{2020 Mathematics Subject Classification.}
Primary 53C55; Secondary  53D20, 32Q15.

\smallskip
\noindent\textbf{Keywords.}
Kähler toric manifolds, holomorphic bisectional curvature, holomorphic sectional curvature.
\end{abstract}

\maketitle

\section{Introduction}
\subsection{Background and Main Results}

The construction of Kähler metrics satisfying prescribed curvature conditions is one of the central problems in complex differential geometry. Classical examples include Kähler--Einstein metrics, constant scalar curvature Kähler metrics, extremal Kähler metrics, and positive holomorphic (bi)sectional curvature. In general, the relevant equations are highly nonlinear and depend delicately on the global geometry of the underlying manifold. Obtaining explicit expressions for such metrics is difficult due to the complexity of the curvature conditions. Many constructions proceed by imposing sufficient symmetry so that the curvature equation, originally a nonlinear partial differential equation on the manifold, reduces to an ordinary differential equation for a single profile function satisfying suitable conditions.

One classical approach was introduced by Klembeck \cite{Klembeck}, who considered radially symmetric metrics on $\mathbb{C}^{n}$, and gave conditions under which such metrics are complete and possess positive holomorphic sectional curvature. Building on this radial symmetry, Wu and Zheng \cite{zheng} later simplified the curvature criteria by reducing the verification to a representative point. Cho and Choi \cite{new} gave examples of Bergman metrics on \(\mathbb{C}^{n}\) induced by weighted Fock spaces that possess positive holomorphic sectional curvature.

Another important method comes from the Calabi ansatz \cite{Calabi}. Koiso and Sakane \cite{koiso} used the momentum map as a coordinate to construct Kähler--Einstein metrics on certain projective bundles. Hwang and Singer \cite{andrew} formulated the momentum construction for circle-invariant Kähler metrics. The theory of Hamiltonian \(2\)-forms developed by Apostolov, Calderbank, Gauduchon, and Tønnesen-Friedman \cite{ACGT1,ACGT2,ACGT3,ACGT4} provides a natural extension of the Calabi ansatz. The Calabi ansatz is a special case of a local Kähler metric admitting a Hamiltonian \(2\)-form of order one. These methods have been widely applied to the study of Kähler metrics, including Kähler--Einstein metrics, Kähler--Ricci solitons, constant scalar curvature Kähler metrics, and extremal Kähler metrics; see, for example, \cite{Cayi1,Cayi2,Cayi4,A1,LeBrun,Simanca,yang,duan}.

Our approach is based primarily on the Delzant--Guillemin--Abreu framework for compact symplectic toric manifolds \cite{delzant,insym,gui}. This framework has also been extended to non-compact toric manifolds \cite{non,abreu1,sca2} and to toric orbifolds \cite{abreu2,orbi1,orbi2}. In this framework, the problem of constructing Kähler metrics with prescribed curvature can be reduced to the construction of a suitable symplectic potential.

Abreu \cite{abreu1} applied this framework to Calabi's four-parameter family of \(U(n)\)-invariant extremal Kähler metrics on Hirzebruch manifolds and showed that this family recovers several classical cohomogeneity-one examples in \cite{Calabi,LeBrun,Simanca,Pedersen}. In a similar spirit, Gauduchon used the momentum profile on the total space of
\(\mathcal O(-\ell)\to \mathbb{CP}^{n-1}\) to give a unified treatment of invariant scalar-flat Kähler metrics, organizing several previously known complete scalar-flat examples within a single construction \cite{Paul}. 
For further applications, see \cite{abreu, ALE, abreu2, raza, sca2,orbi2}.

The purpose of this paper is to give a unified formulation of holomorphic
sectional and bisectional curvature on \(\mathbb C^n\), \(\mathcal O(-\ell)\),
and \(M_{n,\ell}=\mathbb P(\mathcal O(-\ell)\oplus \mathbf 1_{\mathbb{CP}^{n-1}})\), and to recover the corresponding positivity results through a simpler argument than those in the existing literature. In this respect, our approach is inspired by Gauduchon's unified treatment of invariant scalar-flat Kähler metrics via momentum profiles and symplectic potentials \cite{Paul}, while here the focus is on holomorphic sectional and bisectional curvature.

We show that Kähler toric metrics on \(\mathbb{C}^{n}\), \(\mathcal{O}(-\ell)\), and \(M_{n,\ell}\) are described uniformly by the following ansatz, with the case of \(\mathbb C^n\) obtained by setting \(\ell=1\) and the three cases distinguished by different conditions imposed on \(\Theta(x)\):

\begin{align}G(x)=\frac{1}{2}\sum^{n}_{i=1}\ell x_{i}\log(\ell x_{i})-\frac{1}{2}\ell x\log(\ell x)+F(x), \textnormal{ where } F''(x)=\frac{1}{\Theta(x)}. \end{align}

\begin{theorem}
For $\mathbb C^n$, suppose that $\Theta(x)$ is positive  on $(0,+\infty)$, and extends smoothly to $x=0$ with \begin{equation}\Theta(0)=0, ~\Theta'(0)=2,~ \Theta''(x) < 0~(x \geq 0).\end{equation} Then  the corresponding symplectic potential yields a complete Kähler metric with positive holomorphic bisectional curvature on $\mathbb C^n$.
\end{theorem}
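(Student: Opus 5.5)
The plan is to pass from the symplectic potential $G$ to the equivalent $U(n)$-invariant (Calabi-ansatz) description of the metric. Then I would express every curvature component at a representative point through the single profile $\Theta$, and read off positivity from the three hypotheses on $\Theta$. The normalising constants below ($\tfrac12$ versus $2$ and so on) are illustrative and will have to be fixed against the normalisation of $G$.

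\textbf{Step 1 (dictionary).} With $\ell=1$, the term $\tfrac12\sum x_i\log x_i$ is Guillemin's potential for the positive orthant. Adding $-\tfrac12 x\log x+F(x)$ with $x=\sum x_i$ only modifies the metric along the radial direction. Performing the Legendre transform, the Kähler potential has the form $\phi(t)$ with $t=\log|z|^2$. The total moment $x$ is (up to the normalisation) $\phi'(t)$, and the momentum profile satisfies $\Theta(x)=c\,\phi''(t)$, which is the meaning of $F''=1/\Theta$. For the flat metric one has $\Theta(x)=2x$, consistent with $\Theta'(0)=2$. In these terms the metric at $z=(r,0,\dots,0)$ is diagonal. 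Its tangential eigenvalue is proportional to $x/r^2$ and its radial eigenvalue is proportional to $\Theta(x)/r^2$.

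\textbf{Step 2 (smoothness and completeness).} Smooth extension across the coordinate hyperplanes follows from Abreu's boundary criterion: $G$ minus the Guillemin potential must be smooth up to the boundary of the orthant. Near $x=0$, the conditions $\Theta(0)=0$, $\Theta'(0)=2$ and smoothness of $\Theta$ make $F(x)-\tfrac12 x\log x$ smooth, because $1/\Theta-1/(2x)$ is smooth. Positivity of $\Theta$ on $(0,\infty)$ gives a nondegenerate metric in the interior. For completeness, the radial distance is $\int^\infty dx/\sqrt{c\,\Theta(x)}$ up to a constant. Concavity together with $\Theta(0)=0$ and $\Theta'(0)=2$ gives $\Theta(x)\le 2x$, so the integral diverges like $\int^\infty x^{-1/2}\,dx$.

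\textbf{Step 3 (curvature).} Compute the curvature tensor at the representative point in the unitary frame $e_1$ (radial) and $e_i$, $i\ge2$ (tangential), as in Wu--Zheng. After converting $t$-derivatives into $x$-derivatives via $d/dt=\Theta\,d/dx$ (up to constants), I expect the independent components to be the following.
\begin{itemize}
\item $R_{1\bar11\bar1}\propto-\Theta''(x)$, which is positive since $\Theta''<0$.
\item $R_{1\bar1i\bar i}\propto(2x\Theta'-\dots)$, more precisely a multiple of $\frac{2x-\Theta}{x^2}\cdot\frac{\Theta'}{\dots}$-type expressions. This is controlled by $\Theta<2x$ and, via concavity, by $\Theta'(x)\le\Theta(x)/x$.
\item The tangential components $R_{i\bar ij\bar j}$ and $R_{i\bar ii\bar i}$ are proportional to $(2x-\Theta)/x^2$, which is positive.
\end{itemize}
Positivity of bisectional curvature for a $U(n)$-invariant metric reduces, by Wu--Zheng, to positivity of these quantities together with a $2\times2$ determinant-type inequality combining the radial and mixed terms. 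I would verify that inequality using the strict concavity inequalities $x\Theta'(x)<\Theta(x)<2x$, which follow from $\Theta''<0$, $\Theta(0)=0$ and $\Theta'(0)=2$.

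The main obstacle is Step 3: getting the curvature components correctly in momentum variables, and checking the mixed inequality for general (non-diagonal) pairs of vectors rather than just the frame directions. I would first reduce via the $U(n)$-symmetry to vectors spanned by $e_1,e_2$, and then write the bisectional curvature as a quadratic form in $|a_1|^2,|a_2|^2,|b_1|^2,|b_2|^2$ and the cross term. I would then show that the cross term is dominated using the inequalities above.
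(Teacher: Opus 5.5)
Your route is the paper's: it uses the Wu--Zheng components in momentum form, $A=-\Theta''$, $B=(\Theta-x\Theta')/x^2$, $C=(4x-2\Theta)/x^2$ (with $\ell=1$). It reads off $A,B,C>0$ from $\Theta''<0$ and the strict concavity inequalities $x\Theta'<\Theta<2x$, and gets completeness from $\Theta<2x$ and $\Theta\sim2x$ at $0$. Your guessed form of $B$ is off. It is exactly $(\Theta-x\Theta')/x^2$, so its positivity is precisely the inequality $x\Theta'<\Theta$ you isolated.

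As written, though, Step 3 aims at the wrong criterion. For bisectional curvature there is no extra ``$2\times2$ determinant-type'' condition, and if you try to prove one you will fail. Write $X=a_1e_1+a'$ and $Y=b_1e_1+b'$ with $a',b'\perp e_1$. The $U(1)\times U(n-1)$ isotropy leaves only the components $A$, $B$, $C$, $C/2$, and
\[
R_{X\bar XY\bar Y}=A|a_1|^2|b_1|^2+B\,\bigl|a_1b'+b_1a'\bigr|^2+\tfrac{C}{2}\bigl(|a'|^2|b'|^2+|\langle a',b'\rangle|^2\bigr).
\]
The cross term has coefficient $B$ and completes a square with the other $B$-terms. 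So $A,B,C>0$ is already equivalent to positivity; a short case check shows the three terms cannot vanish simultaneously for $X,Y\neq0$.

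By contrast, an inequality such as $AC>B^2$ does not follow from the hypotheses. Take $\Theta=2(1-e^{-x})$, which satisfies all of them. Then $A=2e^{-x}$, $B\sim2/x^2$ and $C\sim4/x$, so $AC<B^2$ for large $x$. The condition that genuinely couples $A$, $B$, $C$, namely $2B+\sqrt{AC}>0$, is the criterion for holomorphic \emph{sectional} curvature. The displayed formula also makes your reduction to $\operatorname{span}(e_1,e_2)$ unnecessary. That reduction would not be valid anyway for $n\ge3$, since $U(n-1)$ can only move the pair $a',b'$ into a $2$-plane, not onto a line.

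In Step 2 you check only metric completeness. You also need the toric complex structure to be complete, i.e.\ $\nabla G(P^\circ)=\mathbb R^n$. Equivalently, $\int_0^1\Theta^{-1}\,dx=\int_1^{\infty}\Theta^{-1}\,dx=+\infty$. Otherwise the Legendre-transformed potential lives on a ball rather than on $\mathbb C^n$: a profile $\Theta=2x+cx^2$ with $c>0$ gives a complete hyperbolic metric on the ball. Under your hypotheses both integrals diverge, by $\Theta\sim2x$ near $0$ and $\Theta<2x$. This is a one-line addition, exactly as in the paper.
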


The above theorem parallels the positivity criteria obtained by Duan--Guan and by Wu--Zheng for \(U(n)\)-invariant K\"ahler metrics on \(\mathbb C^n\); see \cite[Corollary~1]{duan} and \cite[Theorem~2]{zheng}.
\begin{theorem}
Let $\lambda>0,~ \varepsilon \in (0,2)$. For $\mathcal{O}(- \ell)$, suppose that $\Theta(x)$ is positive  on $(\lambda,+\infty)$, and extends smoothly to $x=\lambda$ with \begin{equation}\Theta(\lambda)=0,~ \Theta'(\lambda)=2,~ \Theta''(x)<0~(x \geq \lambda).\end{equation} If, in addition, the following holds:
\begin{align} 
(\lambda - 2x) \Theta''(x) -2  \Theta'(x)>0   \text{ when }  \ell=1,\end{align}\begin{align} 
 \varepsilon(\lambda - 2x) \Theta''(x)-4 \ell  \Theta'(x) >0  \text{ and } \Theta(x)<\frac{2-\varepsilon}{\ell-1}\lambda  \text{ when }   \ell>1, \end{align}
then the corresponding symplectic potential yields a complete Kähler metric on $\mathcal{O}(- \ell)$ with positive holomorphic sectional curvature.
\end{theorem}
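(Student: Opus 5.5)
We would begin by making the toric data explicit. The polytope of $\mathcal O(-\ell)$ is cut out by $x_i\ge 0$ and $x=\sum_i x_i\ge \lambda$, where the facet $x=\lambda$ is the zero section $\mathbb{CP}^{n-1}$. We would check that $G$ satisfies the Guillemin--Abreu boundary conditions. The term $\tfrac12\sum_i \ell x_i\log(\ell x_i)$ gives the correct behaviour on the facets $x_i=0$. The term $-\tfrac12\ell x\log(\ell x)$ is smooth away from $x=0$, and we take it as given that its role is to cancel the diagonal part of the Hessian so that the metric on the base is the expected Fubini--Study-type metric. On the facet $x=\lambda$, the conditions $\Theta(\lambda)=0$, $\Theta'(\lambda)=2$ make $F''=1/\Theta$ behave like $\tfrac{1}{2(x-\lambda)}$, which is exactly Abreu's smoothness criterion along that facet. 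Hence $\mathrm{Hess}\,G$ defines a smooth toric Kähler metric on $\mathcal O(-\ell)$.

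Next we would pass from action-angle coordinates to the $U(n)$-invariant (Calabi-ansatz) description. Inverting $\mathrm{Hess}\,G$, whose structure is a diagonal matrix plus a rank-one correction in the direction $(1,\dots,1)$, we expect
\[
\omega=\tfrac{x}{\ell}\,\pi^*\omega_{FS}+\tfrac{1}{\Theta(x)}\,dx^2+\Theta(x)\,\theta^2
\]
in the appropriate real form, with $x$ the momentum of the fibre circle.

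Completeness is then immediate. Concavity together with $\Theta(\lambda)=0$ and $\Theta'(\lambda)=2$ gives $\Theta(x)\le 2(x-\lambda)$, so the radial distance $\int_\lambda^\infty dx/\sqrt{\Theta}$ diverges. The base directions scale like $x$, so they cause no problem.

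For the curvature we would use $U(n)$-symmetry to reduce to a single representative point on each level set, as Wu--Zheng do. At such a point we choose a unitary frame $e_0$ (fibre/radial direction) and $e_1,\dots,e_{n-1}$ (base directions). We expect the only nonzero components, up to symmetries, to be
\[
R_{0\bar00\bar0}=-\tfrac12\Theta''\cdot(\ast),\qquad
R_{0\bar0a\bar a}=A(x),\qquad
R_{a\bar a a\bar a}=2R_{a\bar ab\bar b}=B(x).
\]
Here $A$ is a combination of $\Theta'/x$ and $\Theta/x^2$, and $B$ is a combination of $\lambda/x^2$, $\Theta/x^2$ and $\ell$; these come from differentiating the Calabi-ansatz metric, equivalently from Abreu's curvature formula. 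For a unit vector $v=a e_0+\sum_i b_i e_i$, write $s=|a|^2$ and $t=\sum_i|b_i|^2$. By the Kähler symmetries, $H(v)$ becomes a quadratic form in $(s,t)$ plus terms involving $\mathrm{Re}(a^2\bar b^2)$. These terms are controlled because all components with an odd number of $0$ indices vanish at the representative point.

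The main obstacle is the positivity of this quadratic form, which needs $Q(s,t)=\alpha s^2+2\beta st+\gamma t^2>0$ for all $s,t\ge 0$. The diagonal coefficient $\alpha\sim-\Theta''$ is positive by concavity. The base coefficient $\gamma$ should be proportional to $2\lambda-(\ell-1)\Theta$, up to positive factors; this is where the hypothesis $\Theta<\frac{2-\varepsilon}{\ell-1}\lambda$ enters, and it is vacuous when $\ell=1$. The cross coefficient $\beta$ may be negative. In that case we would require $\beta^2<\alpha\gamma$, or bound $2|\beta|st$ via $\varepsilon$-weighted AM--GM. We expect this estimate to reduce exactly to the differential inequality $\varepsilon(\lambda-2x)\Theta''-4\ell\Theta'>0$ when $\ell>1$, and to $(\lambda-2x)\Theta''-2\Theta'>0$ when $\ell=1$, where $\gamma$ no longer needs the $\varepsilon$-margin. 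Getting the constants in $A$ and $B$ right, so that the cross-term estimate closes with precisely these hypotheses, is the delicate computation. We would first verify the formulas on the test case $\ell=1$ against the $\mathbb C^n$ theorem, which is recovered in the limit $\lambda\to 0$.
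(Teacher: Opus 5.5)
\paragraph{The curvature step is not proved.} Your admissibility and metric-completeness steps are fine. You should also record $J$-completeness, $\int_\lambda^{\lambda+1}\Theta^{-1}\,dx=\int_{\lambda+1}^{\infty}\Theta^{-1}\,dx=+\infty$. It follows from $\Theta\le 2(x-\lambda)$ and $\Theta\sim 2(x-\lambda)$ at $x=\lambda$, and it is what identifies the complex structure with that of $\mathcal O(-\ell)$.

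The genuine gap is the curvature step: you predict, but never show, that the hypotheses make the quadratic form positive, and the prediction rests on wrong coefficients. Write $A$, $B$, $C$ for the fibre, mixed and base components as in Proposition~\ref{k=2}. Then the holomorphic sectional curvature of a unit vector is $A\tau^2+4B\tau(1-\tau)+C(1-\tau)^2$, with $A=-\Theta''$, $B=(\Theta-x\Theta')/x^2$ and $C=(4x-2\ell\Theta)/(\ell x^2)$.
\begin{itemize}
\item The base coefficient $C$ contains no $\lambda$. It is proportional to $2x-\ell\Theta$, not to $2\lambda-(\ell-1)\Theta$.
\item The mixed coefficient satisfies $B=-2/\lambda<0$ on the zero section.
\end{itemize}
So the direct condition $4B^2<AC$ does not line up with the hypotheses. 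Nothing in your sketch explains how an $\varepsilon$-weighted AM--GM would yield exactly $\varepsilon(\lambda-2x)\Theta''-4\ell\Theta'>0$ and $\Theta<\frac{2-\varepsilon}{\ell-1}\lambda$; in fact $\varepsilon$ enters in a different way.

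A smaller point: the $a^2\bar b^2$-type terms come from components $R_{0\bar a0\bar b}$, which have two $0$ indices. They vanish because of the $U(n-1)$ isotropy, not because of your parity remark.

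\paragraph{The missing idea.} Compare with the translated $\mathbb C^n$ profile $\Theta_0(x)=\Theta(x+\lambda)$. Concavity together with $\Theta(\lambda)=0$ and $\Theta'(\lambda)=2$ gives $0\le\Theta'\le2$, $\Theta\ge(x-\lambda)\Theta'$ and $\Theta\le2(x-\lambda)$. Hence the three pieces
\[
R_1=-\tau^2\Theta'',\qquad R_2=\frac{4(\Theta-(x-\lambda)\Theta')}{(x-\lambda)^2}\tau(1-\tau),\qquad R_3=\frac{4(x-\lambda)-2\Theta}{(x-\lambda)^2}(1-\tau)^2
\]
are all nonnegative, and
\[
\ell x^2R-\ell(x-\lambda)^2(R_1+R_2)-(x-\lambda)^2R_3
=\ell\lambda(\lambda-2x)\Theta''\,\tau^2-4\ell\lambda\Theta'\,\tau(1-\tau)+2\bigl(2\lambda-(\ell-1)\Theta\bigr)(1-\tau)^2 .
\]
This is where $2\lambda-(\ell-1)\Theta$ actually appears: as the $(1-\tau)^2$-coefficient of the remainder, not of $R$ itself.

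The remainder is positive on $[0,1]$ as soon as $2\ell\lambda\Theta'^2<(\lambda-2x)\Theta''\bigl(2\lambda-(\ell-1)\Theta\bigr)$. Using $\Theta'^2\le2\Theta'$, and for $\ell>1$ also $2\lambda-(\ell-1)\Theta>\varepsilon\lambda$, this reduces exactly to the stated hypotheses.

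A direct attack on $4B^2<AC$ could in principle be pushed through. As written, however, your proposal contains no argument that closes under these hypotheses.
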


The above theorem recovers positivity results \cite{duan} in the language of the Guillemin--Abreu formalism.
\begin{theorem}
In every Kähler class on \(M_{n,\ell}\), the canonical momentum profile
\[\Theta(x)
=
\frac{2x(x-a)(b-x)}
{\ell(x-a)(b-x)+x(b-a)}\] defines a complete Kähler metric with positive holomorphic sectional curvature on \(M_{n,\ell}\), where the parameters $b>a>0$ determine the Kähler class.
\end{theorem}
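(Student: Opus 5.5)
The plan is to treat $M_{n,\ell}$ exactly as $\mathcal O(-\ell)$ was treated, with the half-line $(\lambda,+\infty)$ replaced by the bounded interval $(a,b)$. Since $M_{n,\ell}$ is compact, completeness is automatic. The content of the statement is therefore three things: smoothness of the metric defined by the ansatz $G$, the fact that every Kähler class is reached, and positivity of the holomorphic sectional curvature.

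\textbf{Step 1: polytope, Kähler classes and smoothness.} First I will identify the Delzant polytope of $M_{n,\ell}$ as $\{x_i\ge 0,\ a\le x\le b\}$, with $x=\sum x_i$ suitably normalized by $\ell$. Up to the obvious normalization, the pair $0<a<b$ then parametrizes all Kähler classes. Next I will invoke Abreu's boundary criterion in the form already used for $\mathbb C^n$ and $\mathcal O(-\ell)$. The facets $x_i=0$ are handled by the $\frac12\sum \ell x_i\log(\ell x_i)-\frac12\ell x\log(\ell x)$ part of $G$. The two facets $x=a$ and $x=b$ require $\Theta>0$ on $(a,b)$, smoothness of $\Theta$ up to the ends, and
\[
\Theta(a)=\Theta(b)=0,\qquad \Theta'(a)=2,\qquad \Theta'(b)=-2 .
\]
For the given profile, the numerator and denominator are both positive on $(a,b)$. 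A direct computation gives $\Theta'(a)=2a(b-a)/(a(b-a))=2$, and similarly $\Theta'(b)=-2$. Smoothness at the ends is clear because the denominator equals $a(b-a)>0$ at $x=a$ and $b(b-a)>0$ at $x=b$.

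\textbf{Step 2: the key rewriting and concavity.} The crucial observation is
\[
\frac1{\Theta(x)}=\frac{\ell}{2x}+\frac{1}{2(x-a)}+\frac{1}{2(b-x)}=:h(x),
\]
so that $F''=h$ and $G$ is precisely Guillemin's canonical potential. Concavity follows from this. We have $\Theta''=(2h'^2-hh'')/h^3$. Write $h=\sum w_i/u_i$ with $u_i\in\{x,\,x-a,\,b-x\}$ all positive and weights $w_i\in\{\ell/2,1/2,1/2\}$. Then $|h'|\le\sum w_i/u_i^2$ and $h''=\sum 2w_i/u_i^3$. By Cauchy--Schwarz, $(\sum w_i/u_i^2)^2\le(\sum w_i/u_i)(\sum w_i/u_i^3)=\tfrac12 hh''$. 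The inequality is strict, because $h'$ contains terms of both signs and the $u_i$ are distinct. Hence $\Theta''<0$ on $[a,b]$; at the endpoints this also follows by continuity together with a direct check.

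\textbf{Step 3: curvature reduction and the main inequality.} Using $U(n)$-invariance, I will reuse the curvature formulas derived for $\mathcal O(-\ell)$ in the Guillemin--Abreu coordinates. These reduce holomorphic sectional curvature at a representative point to a quartic form in a fibre component and a base component, with coefficients built from $x$, $\Theta$, $\Theta'$ and $\Theta''$. Positivity then splits into three pieces:
\begin{itemize}[leftmargin=2em]
\item the pure fibre direction, which is controlled by $-\Theta''>0$;
\item the pure base direction, which needs a bound of the type $\Theta<\frac{2-\varepsilon}{\ell-1}\cdot(\text{const})$ when $\ell>1$, i.e. positivity of the curvature of the base $\mathbb{CP}^{n-1}$ directions, roughly $2x-(\ell-1)\Theta>0$ after normalization;
\item the mixed term, which requires an inequality analogous to $(\lambda-2x)\Theta''-2\Theta'>0$, or its $\varepsilon$-version with $4\ell\Theta'$, now with $\lambda$ replaced by $a$.
\end{itemize}

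\textbf{Main obstacle.} I expect the mixed inequality to be the hardest step, especially near $x=b$, where $\Theta'\to-2$ changes the sign bookkeeping compared with the half-line case. Near $x=a$ the situation is as in the $\mathcal O(-\ell)$ analysis. My first attempt will be to express everything through $h$:
\[
\Theta'=-\frac{h'}{h^2},\qquad \Theta''=\frac{2h'^2-hh''}{h^3}.
\]
Multiplying by $h^3>0$ turns the condition into a rational inequality in $x$. Clearing the denominators $x^3(x-a)^3(b-x)^3$ then leaves a polynomial inequality on $[a,b]$, which I would verify by grouping terms using the same Cauchy--Schwarz / partial-fraction structure as in Step 2, and by checking the endpoints separately. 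If the uniform quartic bound fails, I will instead verify the curvature at the representative point directly, in the manner of Wu--Zheng, i.e. minimize the quartic over the unit sphere in (fibre, base) coordinates.
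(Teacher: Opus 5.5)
Your Steps 1 and 2 are correct. The endpoint data check out, the identity $1/\Theta=\frac{\ell}{2x}+\frac{1}{2(x-a)}+\frac{1}{2(b-x)}$ holds, and your Cauchy--Schwarz argument is a valid substitute for the paper's minimization of the cubic $\phi$ in proving $A=-\Theta''>0$. Strictness really comes from $|h'|<\sum w_i/u_i^2$, since the $u_i$ need not be distinct; at the ends, $\Theta''(a)=-4\ell/a-4/(b-a)$ and $\Theta''(b)=-4\ell/b-4/(b-a)$.

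The gap is Step 3, which is the whole content of the theorem. It is not carried out, and the route you propose cannot work. Transplanting the $\mathcal O(-\ell)$ decomposition with $\lambda=a$ leaves the remainder $\ell a(a-2x)\Theta''\tau^2-4\ell a\Theta'\tau(1-\tau)+\bigl(4a-2(\ell-1)\Theta\bigr)(1-\tau)^2$, and this is negative somewhere in general K\"ahler classes:
\begin{itemize}
\item For $b\gg a$ one has $\Theta(b/2)\approx b/(\ell+2)$. So for $\ell\ge 2$ the $\tau=0$ coefficient is negative, i.e.\ the hypothesis $\Theta<\frac{2-\varepsilon}{\ell-1}a$ fails.
\item For $\ell=1$ and $a\ll x\ll b$ one has $\Theta\approx x$ and $\Theta'\approx 1$, while $(a-2x)\Theta''$ is small (of order $a^2/x^2+x/b$). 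Hence $(a-2x)\Theta''-2\Theta'\approx -2$. Even the exact discriminant condition $\Theta'^2<(a-2x)\Theta''$ fails: for $a=1$, $b=100$, $x=10$ one gets $\Theta'=0.9$ and $(a-2x)\Theta''\approx 0.23$.
\end{itemize}
So no polynomial grouping will rescue this route. That is exactly why the paper remarks that the translation method does not apply to $M_{n,\ell}$.

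Your description of the pure base direction is also off. At $\tau=0$ the curvature is $C=(4x-2\ell\Theta)/(\ell x^2)$, which needs $\ell\Theta<2x$, not a bound on $(\ell-1)\Theta$. Here it is automatic, since $2x-\ell\Theta=2(b-a)x^2/L$ with $L=\ell(x-a)(b-x)+(b-a)x$.

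What is needed is the direct check you list only as a fallback, together with the observation that $B$ changes sign.
\begin{itemize}
\item In your notation, $\Theta-x\Theta'=(xh)'/h^2$ with $(xh)'=\frac{(b-a)(x^2-ab)}{2(x-a)^2(b-x)^2}$. Hence $2B=4(b-a)(x^2-ab)/L^2$, which is negative on $(a,\sqrt{ab})$.
\item So bisectional positivity fails there, and one must prove $2B+\sqrt{AC}>0$. This is the criterion for $A\tau^2+4B\tau(1-\tau)+C(1-\tau)^2>0$ on $[0,1]$.
\item With $A=4(b-a)N/L^3$, $N=\ell\bigl(a^2b^2-3abx^2+(a+b)x^3\bigr)+(b-a)x^3$ and $C=4(b-a)/(\ell L)$, the condition becomes $x^2-ab+\sqrt{N/\ell}>0$.
\item This is trivial for $x\ge\sqrt{ab}$. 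For $a<x<\sqrt{ab}$ it follows from the identity $N/\ell-(ab-x^2)^2=x^2\bigl((x-a)(b-x)+\frac{b-a}{\ell}x\bigr)>0$.
\end{itemize}
Without this computation, or an equivalent one, the proposal does not establish positivity of the holomorphic sectional curvature.
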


The above theorem gives a simpler example than the metrics constructed by Yang and Zheng \cite[Theorem 1.3]{yang}.

\subsection{Organization of the Paper}

In Section 2, we recall the aspects of Guillemin–Abreu theory and explain how toric Kähler metrics with prescribed curvature can be constructed via suitable symplectic potentials.

 In Section 3, we derive the admissibility conditions for the symplectic potentials on \(\mathbb C^n\), \(\mathcal O(-\ell)\), and \(M_{n,\ell}\), and compute the corresponding curvature expressions.

In Section~4, we apply these formulas to \(\mathbb C^n\), \(\mathcal O(-\ell)\), and \(M_{n,\ell}\). We recover the known positivity criteria for \(\mathbb C^n\) and \(\mathcal O(-\ell)\), and show that the canonical toric metric on \(M_{n,\ell}\) has positive holomorphic sectional curvature in every Kähler class.

\section{Guillemin-Abreu Theory}\label{sec2}

\begin{definition}({\cite[Definition 18]{abreu1}})
A toric symplectic manifold is a connected symplectic manifold
\((M^{2n},\omega)\), equipped with an effective Hamiltonian action of the
\(n\)-torus
$
\tau:\mathbb{T}^n \cong \mathbb{R}^n/2\pi\mathbb{Z}^n
\hookrightarrow \operatorname{Ham}(M,\omega),
$
such that the corresponding moment map
$
\mu:M\to \mathbb{R}^n,
$
which is well-defined up to a constant, is proper onto its convex image
$
P=\mu(M)\subset \mathbb{R}^n.
$ Note that the requirement that the moment map be “proper onto its convex
image” is automatic for compact manifolds.
\end{definition}

\begin{definition}({\cite[Definition 22]{abreu1}})
A convex polyhedral set \(P\) in \(\mathbb{R}^n\) is called simple and integral if
\begin{enumerate}
\item[(i)]  there are \(n\) edges meeting at each vertex \(p\).

    \item[(ii)]  the edges meeting at the vertex \(p\) are rational, i.e., each edge is of the form
$
    p+t v_i, 0\leq t\leq \infty,
$
    where \(v_i\in \mathbb{Z}^n\).

    \item[(iii)]  the \(v_1,\ldots,v_n\) in \(\textnormal{(ii)}\) can be chosen to be a
    \(\mathbb{Z}\)-basis of the lattice \(\mathbb{Z}^n\).
\end{enumerate}

A facet is a face of \(P\) of codimension one.

A Delzant set is a simple and integral convex polyhedral set
\(P\subset \mathbb{R}^n\). A Delzant polytope is a compact Delzant set.

Two Delzant sets are said to be isomorphic if one can be mapped to the other
by a translation.
\end{definition}

If $(M,\omega_M,\tau_M)$ is a toric symplectic manifold with moment map
\(\mu:M\to\mathbb R^n\), then \(P\equiv\mu(M)\) is a Delzant set \cite[Theorem 23]{abreu1}. Each Delzant set \(P\) is associated,
via an explicit symplectic reduction construction, with a canonical K\"ahler toric
manifold
\[
(M_P,\omega_P,\tau_P,\mu_P,J_P)
\quad \text{such that} \quad
\mu_P(M_P)=P.
\]
Moreover, the image \(P=\mu(M)\) determines \((M,\omega_M,\mu_M)\) up to isomorphism \cite[Proposition 6.5]{non}.

\begin{theorem}[{\cite[Theorem 2.8]{insym}}]\label{1}
Let $(M_P, \omega_P, \tau_P)$ be the toric symplectic manifold associated to a Delzant polytope $P=\{ l_i \geq 0,i=1,\cdots,d\} \subset \mathbb{R}^n$, and $J$ any compatible toric complex structure. Then $J$ is determined by a ``potential'' $G \in C^\infty(P^\circ)$ of the form
\begin{equation}
\label{eq:2.9}
G = G^{\mathrm{can}} + h 
\end{equation}
where $G^{\mathrm{can}}$ is given by $G^{\mathrm{can}}(x)=\frac{1}{2}\sum_{r=1}^{d}l_{r}(x)\log l_{r}(x)$, $h$ is smooth on the whole $P$, and the matrix $(G_{ij}) = \mathrm{Hess}_x(G)$ is positive definite on $P^\circ$ and has determinant of the form
\begin{align}
\det(G_{ij}) = \left[ \delta(x) \cdot \prod_{r=1}^d l_r(x) \right]^{-1}
\end{align}
with $\delta$ being a smooth and strictly positive function on the whole $P$.

\smallskip
Conversely, any such $G$ determines a compatible toric complex structure $J$ on $(M_P, \omega_P)$, which in the $(x,y)$ symplectic coordinates of $M_P^\circ \cong P^\circ \times \mathbb{T}^n$ has the form 
\begin{equation}
J = \begin{bmatrix}
0  & -(G_{ij})^{-1} \\
(G_{ij}) & 0
\end{bmatrix}.
\end{equation}
\end{theorem}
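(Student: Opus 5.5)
Since this statement is quoted from \cite[Theorem 2.8]{insym}, I would follow the structure of Abreu's argument: first handle the open dense part $M_P^\circ\cong P^\circ\times\mathbb T^n$ by Legendre duality, then handle the boundary strata by comparing with the canonical complex structure $J_P$.

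\emph{Interior.} On the interior, a compatible $\mathbb T^n$-invariant integrable $J$ admits holomorphic coordinates $w=u+iy$ on $(\mathbb C^*)^n$, with $y$ the angle coordinates. In these coordinates $\omega=2i\partial\bar\partial\varphi$ for a strictly convex potential $\varphi(u)$ depending only on $u$. The moment map is $x=\partial\varphi/\partial u$. I would then define $G$ as the Legendre transform,
\[
G(x)=x\cdot u-\varphi(u),\qquad u=\partial G/\partial x,
\]
so that $\mathrm{Hess}_x G=(\mathrm{Hess}_u\varphi)^{-1}$. Rewriting $J$, which sends $\partial_u\mapsto\partial_y$, in the $(x,y)$ coordinates gives exactly the block matrix in the statement. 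Conversely, given any strictly convex $G$ on $P^\circ$, the functions $w=\partial G/\partial x+iy$ are complex coordinates for the displayed $J$. Hence $J$ is integrable, and $\omega=\sum dx_i\wedge dy_i$ is compatible with $J$ precisely because $(G_{ij})>0$. This is a routine computation.

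\emph{Canonical potential.} For the canonical structure $J_P$, obtained by Kähler reduction of $\mathbb C^d$ by the subtorus determined by the normals of $P$, I would invoke Guillemin's computation. It shows that the symplectic potential is $G^{\mathrm{can}}=\frac12\sum_r l_r\log l_r$. In particular $\det\mathrm{Hess}(G^{\mathrm{can}})=[\delta_0\prod_r l_r]^{-1}$ with $\delta_0$ smooth and positive on $P$. This can be checked directly in the local model near a vertex, namely $\mathbb C^n$ with $l_r=x_r$ and $G^{\mathrm{can}}=\frac12\sum x_r\log x_r$, which gives the flat metric.

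\emph{Boundary analysis (main obstacle).} The difficulty is to show that $J$ extends smoothly across the boundary strata $\mu^{-1}(\partial P)$ if and only if $h=G-G^{\mathrm{can}}$ is smooth on all of $P$ and the determinant has the stated form. I would localize near a point of a codimension-$k$ face. By the equivariant Darboux/slice theorem the neighbourhood is modelled on $\mathbb C^k\times(\mathbb C^*)^{n-k}$, with $k$ facets $l_1=x_1,\dots,l_k=x_k$. There $J$ and $J_P$ are both smooth $\mathbb T^n$-invariant compatible structures, so they are related by an equivariant biholomorphism. Its effect on the $u$-potentials is $\varphi=\varphi^{\mathrm{can}}+(\text{smooth function of } |z_j|^2)$. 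I would translate this to $x$ using the fact that smooth invariant functions on $\mathbb C^k$ are smooth functions of $|z_j|^2$, and that $|z_j|^2\sim x_j$ to leading order. That translation yields smoothness of $h$ up to the face.

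For the determinant, I would write $\mathrm{Hess}\,G=\mathrm{Hess}\,G^{\mathrm{can}}(I+(\mathrm{Hess}\,G^{\mathrm{can}})^{-1}\mathrm{Hess}\,h)$. Since $(\mathrm{Hess}\,G^{\mathrm{can}})^{-1}$ is smooth and vanishes appropriately, $\delta=\delta_0\det(\cdots)^{-1}$ is smooth. Its positivity up to the boundary is equivalent to nondegeneracy of the metric on the fixed-point set.

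The converse direction runs the same local argument backwards: given $h$ smooth and $\delta>0$, the metric $G_{ij}dx_idx_j+G^{ij}dy_idy_j$ in polar-type coordinates extends smoothly across $x_j=0$, so $J$ extends. I expect this bookkeeping of smoothness in $x_j$ versus $|z_j|^2$ at the faces to be the hardest step. I would handle it by first treating $k=1$, i.e.\ $\mathbb C\times(\mathbb C^*)^{n-1}$, explicitly, and then taking products.
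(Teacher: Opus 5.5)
The paper gives no proof of this statement. It is quoted from \cite[Theorem~2.8]{insym}, and the only related ingredient the paper records is the equivariant biholomorphism $(M_P,J,\tau_P)\cong(M_P,J_P,\tau_P)$ of \cite[Proposition~A.1]{insym}. Your overall strategy is the right one, but two steps are not established, and one of them would fail as you plan it.

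Your interior step and your use of Guillemin's $G^{\mathrm{can}}$ are standard and fine, provided the angle coordinates are adapted to $J$. (They are only determined up to $(x,y)\mapsto(x,y+\nabla f(x))$.) The forward boundary step, however, rests on the sentence ``$J$ and $J_P$ are both smooth $\mathbb T^n$-invariant compatible structures, so they are related by an equivariant biholomorphism.'' That is true, but it is the real content of the forward direction, and the ``so'' needs an argument. You can use the global result just cited, which identifies $(M_P,J)$ with the toric variety of $P$; the global $\partial\bar\partial$-lemma then gives $\varphi=\varphi_P+f$ with $f$ smooth and invariant on $M_P$. Alternatively, prove a local version: Bochner-linearize the isotropy torus $T^k$ in $J$-holomorphic coordinates at a boundary point (compatibility with $\omega$ fixes the signs of the weights), then sweep out with the local $(\mathbb C^*)^{n-k}$-action generated by the commuting fields $Y_j, JY_j$, $j>k$.

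Your translation to $x$ also needs an input you only allude to. The Jacobian of $(|z_1|^2,\dots,|z_k|^2,\operatorname{Re}w'')\mapsto x$ must be invertible on the face, which is nondegeneracy of $g$ there. This gives $|z_j|^2=x_j\sigma_j(x)$ with $\sigma_j$ smooth and positive, hence $G=\tfrac12\sum_{j\le k}x_j\log x_j+(\text{smooth})$. The same computation gives the determinant, since $\det\mathrm{Hess}_u\varphi$ equals $\prod_{j\le k}2|z_j|^2$ times a smooth positive function.

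The converse, as planned, would not go through. Treating $k=1$ and then ``taking products'' fails at points of faces of codimension $k\ge 2$, because the metric is not a product there: $\mathrm{Hess}\,h$ couples $x_1,\dots,x_k$. So the one-variable computation says nothing about the mixed terms $H^{ij}dy_i\,dy_j$, $i\ne j\le k$, where $H=(G_{ij})^{-1}$. What you need is the structure of $H$. Near a vertex with $l_j=x_j$, write $(G_{ij})=\tfrac12 D^{-1}+S$ with $D=\mathrm{diag}(x_1,\dots,x_n)$ and $S$ smooth on $P$. Then
\[
H=2D-4DS(I+2DS)^{-1}D .
\]
With $z_j=r_je^{iy_j}$, the one-forms $x_j\,dy_j=\tfrac12 r_j^2\,dy_j$ and $dx_j=r_j\,dr_j$ are smooth. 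Hence $g=\sum_j(dr_j^2+r_j^2dy_j^2)+(\text{smooth, vanishing at the vertex})$. $J$ then extends because it is determined by $g$ and $\omega$, and $N_J=0$ by continuity.

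At a point in the relative interior of a face of codimension $1\le k<n$, write the normal block as $E=\tfrac12\mathrm{diag}(x_1^{-1},\dots,x_k^{-1})+A$ and the tangential block as $C$. The same argument then requires inverting the tangential Schur complement $\Sigma=C-B^{T}E^{-1}B$. Here $\det\Sigma$ is $\delta^{-1}$ times a smooth positive function, so this is exactly where $\delta>0$ is used. Near vertices it is automatic once $h$ is smooth, and your sketch never says where $\delta$ enters. Alternatively, show that $z_j=\exp(\partial G/\partial x_j+iy_j)=\sqrt{x_j}\,e^{\psi_j(x)}e^{iy_j}$ for $j\le k$, together with $w_j=\partial G/\partial x_j+iy_j$ for $j>k$, form a smooth chart up to the face. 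Again, $\delta>0$ is what makes the tangential block of the Jacobian invertible.
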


In the non-compact case, an analogous statement holds {\cite[Theorem 39]{abreu1}}. Complete toric compatible complex structures are described by symplectic potentials of the form
$
G = G^{\mathrm{can}} + h ,
$
satisfying the same conditions as in the compact case. Conversely, any such potential defines a toric compatible complex structure, which is not necessarily complete. Here completeness of a toric compatible complex structure is understood in the
following sense.

\begin{definition}[{\cite[Definition 38]{abreu1}}]
A toric
compatible complex structure \(J\) on a symplectic toric manifold $M$ is said to be
complete if the \(J\)-holomorphic vector fields
$
JY_1,\ldots,JY_n
$
are complete, where $
Y_1,\ldots,Y_n
$
are the Hamiltonian vector fields generating the torus action.
\end{definition}

Let $G=G^{\mathrm{can}}+h$ be a candidate symplectic potential. To show that it defines an
admissible toric K\"ahler metric, we shall use the following admissibility criterion, denoted by \((\star)\):
\begin{enumerate}
\item[(i)]  The function \(h\) is smooth on \(P\).

\item[(ii)] The Hessian matrix $(G_{ij})$ is positive
definite on \(P^\circ\).

\item[(iii)] 
The function $\delta=\bigl( \det (G_{ij})\prod_{r=1}^d l_r(x)\bigr)^{-1}$ is smooth and strictly positive on $P$.

\item[(iv)] In the non-compact case, we must also check that the induced toric complex structure is
complete. In the compact case, this condition is automatic.
\end{enumerate}

A key point in the Guillemin--Abreu theory is that the complex structures
appearing above are biholomorphically equivalent. More precisely, if $M$ is compact, then for any toric compatible complex structure \(J\) on \((M_P,\omega_P,\tau_P) \), we have
\begin{equation}(M,J_M,\tau_M) \cong (M_P,J_P,\tau_P)\cong (M_P,J,\tau_P)\end{equation} by equivariant biholomorphisms \cite[Remark 2.4,\ Proposition A.1]{insym}. If $M$ is non-compact, then the analogue of the first identification holds for AK-toric manifolds \cite[Lemma 2.14]{Uni1} and the second identification holds if the toric compatible complex structure \(J\) is complete. The same argument as in Appendix~A of \cite{insym} gives the second identification in the complete non-compact case; see also \cite[Remark 2.11]{sca2}. Note that there is no immediate relation between completeness of a toric compatible complex structure and completeness of the associated toric Kähler metric \cite[Remark 2.12]{sca2}. 

The completeness of a toric
compatible complex structure can be studied as follows.
If \(J\) is determined by \(G\), set
$
\xi=\nabla G:P^\circ\to \mathbb R^n
$, 
then the \(J\)-holomorphic coordinates are given by
$
(\xi,\theta)=\left(\frac{\partial G}{\partial x},\theta\right)
$. 
\(J\) is complete precisely when the map \(\xi\) is surjective, i.e.,\ $\nabla G(P^\circ)=\mathbb R^n$. The above argument is the same as in the proof of \cite[Proposition 5.4]{sca2}.

We illustrate the verification of the \(J\)-completeness condition in the case of \(\mathbb C^n\); the case of \(\mathcal O(-\ell)\) is analogous.

\begin{example}
Consider the standard toric Kähler manifold \(\mathbb C^n\), whose Delzant set is $
P=\mathbb R_{\geq 0}^n$.
Here \(x\) denotes $\sum_{i=1}^n x_i $.
Let
\begin{equation}
G(x)
=
\frac12\sum_{i=1}^n x_i\log x_i
-\frac12 x\log x
+F(x) .
\end{equation}

For \(y=(y_1,\ldots,y_n)\in\mathbb R^n\), the inverse of \(\nabla G\), whenever
defined, is given by
\begin{equation}
x_i
=
(F')^{-1}\left(
\frac12\log\sum_{j=1}^n e^{2y_j}
\right)
\frac{e^{2y_i}}{\sum_{j=1}^n e^{2y_j}} .
\end{equation}
Indeed, the function $\frac12\log \sum_{j=1}^n e^{2y_j}$ ranges over all of $\mathbb R$: it takes the value $\frac12\log(n e^{2c}) $ when \(y_1=\cdots=y_n=c\). Hence \(\nabla G(P^\circ)=\mathbb R^n\) is equivalent to
$F'((0,+\infty))=\mathbb R$, i.e.,\ 
\begin{equation}
\int_0^1 \Theta(x)^{-1}\,dx=+\infty,
\qquad
\int_1^{+\infty}\Theta(x)^{-1}\,dx=+\infty .
\end{equation}
\end{example}

\section{Curvature Formulas for the Three Model Spaces
}\label{sec3}

In this section we derive explicit formulas for the holomorphic (bi)sectional curvature of \(\mathbb{C}^n\), $\mathcal O(-\ell)$, and $M_{n,\ell}$, and give criteria for positivity. The point is that, in each case, after a suitable choice of $h$, the symplectic potential $G=G^{\mathrm{can}}+h$ can be written in the same form. For this ansatz, checking \((\star)\) for \(h\) reduces to checking the corresponding conditions on \(\Theta(x)\).

For the following three model spaces, we use the following common notation. Set $x=\sum_{i=1}^n x_i $ and $\eta(\cdot)=\frac{1}{2}(\cdot)\log (\cdot)$. Throughout this section, assume \(b>a>0,\ \ell \in \mathbb{Z}_{>0}\) and $\lambda>0$. Following the method in \cite[Example 42]{abreu1}, up to $GL(n,\mathbb R)$ transformations, the corresponding Delzant sets of \(\mathbb{C}^{n}\), \(\mathcal O(-\ell)\), and \(M_{n,\ell}\) are
\begin{equation}
P_1=\{x_i\geq 0\},\ P_2=\{\ell x_i \geq 0,~x-\lambda \geq 0  \},\ P_3= \{ \ell x_i\geq 0, x-a\geq 0, b-x\geq 0  \},
\end{equation}
respectively, where the indexed inequalities are understood to hold for \(1\leq i \leq n\).  We then define
\begin{equation}
h_1=-\eta(x)+F(x),\ h_2=-\eta(\ell x)-\eta(x-\lambda)+F(x),\ h_3=-\eta(\ell x)-\eta(x-a)-\eta(b-x)+F(x).
\end{equation}

For the three Delzant sets under consideration, the admissible symplectic potentials
$
G_i=G_i^{\mathrm{can}}+h_i
$
have the same form, with the case of \(\mathbb C^n\) obtained by setting \(\ell=1\).  
\begin{align}G(x)=\frac{1}{2}\sum^{n}_{i=1}\ell x_{i}\log(\ell x_{i})-\frac{1}{2}\ell x\log(\ell x)+F(x).  \end{align}

A direct computation gives $\det(G_{ij})=\ell^{n-1} x(2^{n-1}\Theta(x)\prod{x_{i}})^{-1}$. Moreover $(G_{ij})$ is positive definite on the interior if and only if $\Theta(x)>0$. By \cite[Equation~(2.2)]{duan}, \cite[Equation~(2)]{zheng} and \((\star)\), setting $\Theta(x)>0$ on the interval, we have

\begin{center}
\small
\setlength{\tabcolsep}{5pt}
\renewcommand{\arraystretch}{1.15}
\begin{tabular}{@{}l l l p{0.25\textwidth} p{0.25\textwidth}@{}}
\hline
Space & Interval & Endpoint conditions & $J$-completeness & $g$-completeness\\
\hline
\(\mathbb C^n\)
&
\((0,+\infty)\)
&
\(\Theta(0)=0,\ \Theta'(0)=2\)
&
\begin{tabular}[t]{@{}l@{}}
\(\int_0^1\Theta(x)^{-1}\,dx=+\infty\),\\
\(\int_1^{+\infty}\Theta(x)^{-1}\,dx=+\infty\)
\end{tabular}
&
$
\int_{1}^{+\infty}\Theta(x)^{-1/2}\,dx=+\infty
$

\\[0.45em]

\(\mathcal O(-\ell)\)
&
\((\lambda,+\infty)\)
&
\(\Theta(\lambda)=0,\ \Theta'(\lambda)=2\)
&
\begin{tabular}[t]{@{}l@{}}
\(\int_\lambda^{\lambda+1}\Theta(x)^{-1}\,dx=+\infty\),\\
\(\int_{\lambda+1}^{+\infty}\Theta(x)^{-1}\,dx=+\infty\)
\end{tabular}
&
$
\int_{\lambda+1}^{+\infty}\Theta(x)^{-1/2}\,dx=+\infty
$
\\[0.45em]

\(M_{n,\ell}\)
&
\((a,b)\)
&\begin{tabular}[t]{@{}l@{}}
\(\Theta(a)=\Theta(b)=0,\)\\ \( \Theta'(a)=2,\ \Theta'(b)=-2\)\end{tabular}
&
Automatic
&
Automatic
\\
\hline
\end{tabular}
\end{center}

The conditions listed in the table agree exactly with the corresponding endpoint conditions for the momentum profile in \cite[pp.~270, 272, and 285]{Paul}.
The conditions \(\Theta(a)=0\) and \(\Theta'(a)=2\) imply that, near \(x=a\) and on the side \(x>a\),
\begin{equation}
F''(x)=\frac{1}{2(x-a)}+\varphi(x), \textnormal{ i.e., } F(x)=\frac{1}{2}(x-a)\log(x-a)+\psi(x),
\end{equation}
where \(\varphi\) and $\psi$ are smooth near \(x=a\).
Thus the boundary conditions have a natural
interpretation in the Guillemin--Abreu theory: they mean that adding facets to the polytope is compatible with the standard potential.

For the \(U(n)\)-invariant metrics on \(\mathbb C^n\), \(\mathcal O(-\ell)\), and \(M_{n,\ell}\), global positivity of the holomorphic (bi)sectional curvature reduces to positivity at a representative point. 
The resulting formulas, stated below, are consistent with \cite[Lemma~1]{duan} and \cite[Proposition~3.6]{yang}. Notice that the case \(\ell=1\) recovers the \(\mathbb C^n\) case.

\begin{proposition}\label{k=2}
At the representative point \((x_1,0,\ldots,0)\), the nonzero curvature components in the normalized frame associated with $G(x)=\frac{1}{2}\sum^{n}_{i=1}\ell x_{i}\log(\ell x_{i})-\frac{1}{2}\ell x \log(\ell x)+F(x)$ are:
\begin{equation}
\begin{gathered}
A=R_{1 \bar{1} 1 \bar{1}}=- \Theta''(x),\ 
B=R_{i \bar{i} 1 \bar{1}}=R_{i \bar{1} 1 \bar{i}}=R_{1 \bar{i} i \bar{1}}=R_{1 \bar{1} i \bar{i}}=\frac{\Theta(x)-x\Theta'(x)}{x^2},\ i \geq 2,\\[1ex]
C=R_{i \bar{i} i \bar{i}}= \frac{-2\ell  \Theta(x)+4x}{\ell x^2},\
\frac{C}{2}=R_{i \bar{j} j \bar{i}}=R_{i \bar{i} j \bar{j}}= \frac{-\ell \Theta(x)+2x}{\ell  x^2},\ 2\leq i,j\leq n,\ i\neq j.\end{gathered}\end{equation}
\end{proposition}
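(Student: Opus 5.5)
Since the paper cites \cite[Lemma~1]{duan} and \cite[Proposition~3.6]{yang}, the quickest route would be to convert the potential into $U(n)$-invariant Calabi form and read the curvature off the known formulas. I would instead compute directly in complex coordinates, which is more self-contained. The plan is to pass from the symplectic potential $G$ to the Kähler potential via the Legendre transform and then use the standard curvature formula for a radial Kähler potential.

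\textbf{Step 1: Legendre transform.} By the Example above, the inverse of $\xi=\nabla G$ depends only on $t=\frac12\log\sum_j e^{2y_j}$. The $\ell$-scaling only rescales the fibre coordinates of the $\mathbb{CP}^{n-1}$ part. Writing $z_i=e^{\xi_i+i\theta_i}$ and $r=|z|^2$, the Kähler potential $\phi=\langle x,\xi\rangle-G$ is therefore a function of $t=\frac12\log r$ alone, and $x=\phi_t$ is the moment coordinate. Standard Calabi ansatz bookkeeping then gives:
\begin{itemize}
\item the radial metric coefficient is $\frac{d^2\phi}{dt^2}\propto\frac{dx}{dt}=1/F''(x)=\Theta(x)$, up to the normalizing factor $2$ fixed by $\Theta'(0)=2$;
\item the tangential coefficient is proportional to $x/\ell$, coming from the $\ell$-scaled $\mathbb{CP}^{n-1}$ term $-\frac12\ell x\log(\ell x)$.
\end{itemize}
Concretely, at the point $(z_1,0,\dots,0)$ I expect
\[ g_{1\bar1}\propto \Theta(x)/r, \qquad g_{i\bar i}\propto 2x/(\ell r)\quad (i\ge2). \]

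\textbf{Step 2: Curvature by unitary symmetry.} I would use $R_{i\bar jk\bar l}=-\partial_k\partial_{\bar l}g_{i\bar j}+g^{p\bar q}\partial_k g_{i\bar q}\partial_{\bar l}g_{p\bar j}$. For a $U(n)$-invariant metric at $(z_1,0,\dots,0)$, the only possibly nonzero components are those listed. This follows because the isotropy group $U(n-1)$ acts on indices $2,\dots,n$, and the diagonal torus forces index balance. So it suffices to compute $A$, $B$, $C$, and the mixed term with $i\ne j$.

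\textbf{Step 3: Normalize and convert.} I would normalize by $g_{1\bar1}g_{1\bar1}$, $g_{i\bar i}g_{1\bar1}$, and $g_{i\bar i}^2$. Converting $t$-derivatives to $x$-derivatives with $d/dt=\Theta\, d/dx$ should give the claimed formulas:
\begin{itemize}
\item $A=-\Theta''$, since $\Theta\Theta''$ over $\Theta^2$ after the $\Theta$ factor cancels;
\item $B=(\Theta-x\Theta')/x^2$, from differentiating $x/r$ along the radial direction;
\item $C=\frac{4x-2\ell\Theta}{\ell x^2}$, from the Fubini--Study curvature of the $\ell$-scaled fibre minus the second fundamental form contribution $\Theta/x^2$.
\end{itemize}
The relation $R_{i\bar jj\bar i}=R_{i\bar ij\bar j}=C/2$ comes from the Fubini--Study-type structure on the $\mathbb{CP}^{n-1}$ directions. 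Setting $\ell=1$ recovers $\mathbb C^n$.

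\textbf{Main obstacle.} The hardest part is keeping the constants consistent (the factors $2$ and $\ell$ in the normalized frame) between the symplectic and complex pictures. I would fix these by checking the flat metric on $\mathbb C^n$: there $\Theta=2x$, $\ell=1$, and all components must vanish. I would also check the Fubini--Study metric on $M_{n,1}$ limits, which constrains the $C$ constant.
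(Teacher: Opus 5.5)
Your overall strategy is the paper's: Legendre-transform to a radial Kähler potential, reduce by $U(n)$-symmetry to $(z_1,0,\dots,0)$, compute the curvature of a radial metric, and convert $t$-derivatives to $x$-derivatives. The paper just quotes Wu--Zheng's formulas $A=-\frac1h\bigl(\frac{\widetilde r h'}{h}\bigr)'$, $B=\frac{f'}{f^2}-\frac{h'}{hf}$, $C=-\frac{2f'}{f^2}$ (with $f=p'$, $h=(\widetilde r f)'$) rather than rederiving them, and your symmetry reduction is fine.

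The gap is in Step~1, in the handling of $\ell$, which is exactly what distinguishes this statement from the known $\mathbb C^n$ case. From $\xi_i=\partial G/\partial x_i=\frac{\ell}{2}\log(x_i/x)+F'(x)$ you get $F'(x)=\frac{\ell}{2}\log\sum_i e^{2\xi_i/\ell}$. So in your coordinates $z_i=e^{\xi_i+\sqrt{-1}\theta_i}$, the potential $\phi=xF'(x)-F(x)$ is a function of $\sum_i|z_i|^{2/\ell}$, not of $|z|^2$. For $\ell>1$ it is therefore not $U(n)$-invariant. Moreover, since $x_i\sim c\,|z_i|^{2/\ell}$, these are not smooth holomorphic coordinates along $\{z_i=0\}$, which is where your representative point lies. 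Neither the radial formula nor your $U(n-1)$-isotropy argument applies as you set them up (for $\ell=1$ your setup is fine). The correct coordinates, adapted to the lattice in which the normals $\ell e_i$ are primitive, have $|z_i|=e^{\xi_i/\ell}$. In them the Legendre variable is $t=F'(x)=\ell\log|z|$, not $\frac12\log|z|^2$.

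That factor $\ell$ is the source of every $\ell$ in the statement. Writing $\widetilde r=|z|^2$, one has $\frac{d}{d\widetilde r}=\frac{\ell}{2\widetilde r}\frac{d}{dt}=\frac{\ell\Theta}{2\widetilde r}\frac{d}{dx}$. With the paper's normalization $p(\widetilde r)=\phi(t)$ this gives $g_{i\bar i}=f=\frac{\ell x}{2\widetilde r}$ and $g_{1\bar 1}=h=\frac{\ell^2\Theta}{4\widetilde r}$. Then $C=-\frac{2f'}{f^2}=\frac{4x-2\ell\Theta}{\ell x^2}$, while $A$ and $B$ come out independent of $\ell$.

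Your rule $d/dt=\Theta\,d/dx$ with $t=\frac12\log|z|^2$ cannot produce this. For any $U(n)$-invariant Kähler metric, $g_{1\bar1}=(\widetilde r\,g_{i\bar i})'$ along the ray, and with your rule this forces $g_{i\bar i}/g_{1\bar1}=2x/\Theta$, with no $\ell$. So the coefficient $2x/(\ell r)$ you insert by hand is inconsistent with your own setup. Carrying out Step~3 consistently returns the $\ell=1$ formulas; forcing the $\ell$ into $f$ gives a wrong $C$.

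Your proposed safeguards cannot detect this, since the flat check $\Theta=2x$ and the Fubini--Study check on $M_{n,1}$ both have $\ell=1$. Two smaller points:
\begin{enumerate}
\item The flat check cannot fix the overall scale of $g$: normalized curvature scales like $c^{-1}$ under $g\mapsto cg$, so that scale must come from the convention relating $\omega$ to $\phi$.
\item $\phi_{tt}=\Theta$ holds exactly, with no proportionality constant; $\Theta'(0)=2$ is a smoothness condition at the facet, not a normalization.
\end{enumerate}
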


\begin{proof}
We use the results in \cite[Equations~(3)--(4)]{zheng}.
Set $\widetilde{r}=r^2=|z|^2$, and $t=\ell \log r$, then \(\frac{d}{d\widetilde r}
=
\frac{\ell}{2 \widetilde r}\frac{d}{dt}
=
\frac{\ell \Theta}{2\widetilde r}\frac{d}{dx},\) and get
\begin{equation}p(\widetilde r)=\phi(t), \ x=\phi_t,\ \Theta=\phi_{tt},\ f=p'=\frac{\ell x}{2\widetilde{r}},\ h=(\widetilde{r}f)^{'}_{\widetilde{r}}=(\frac{\ell x}{2})^{'}_{\widetilde{r}}=\frac{\ell^2 \Theta}{4\widetilde{r}}.\end{equation}
\begin{equation}\frac{f'}{f}=(\log x)'_{\widetilde{r}}-(\log \widetilde{r})'_{\widetilde{r}}=\frac{\ell  \Theta-2x}{2\widetilde r x},\ \frac{h'}{h}=(\log \Theta)'_{\widetilde{r}}-(\log \widetilde{r})'_{\widetilde{r}}=\frac{\ell \Theta'-2}{2 \widetilde{r}}\end{equation}
\begin{equation}
A
=
-\frac1h\left(\frac{\widetilde r h'}h\right)'
=
-\frac{4\widetilde r}{\ell^2\Theta}
\left(\frac{\ell\Theta'-2}{2}\right)'_{\widetilde r}
=
-\frac{4\widetilde r}{\ell^2\Theta}
\frac{\ell \Theta}{2\widetilde r}\frac{\ell \Theta''}{2}
=
-\Theta''.
\end{equation}
\begin{equation}
B
=
\frac{f'}{f^2}
-
\frac{h'}{hf}
=
\frac{\ell\Theta-2x}{\ell x^2}
-
\frac{\ell\Theta'-2}{\ell x}
=
\frac{\Theta-x\Theta'}{x^2}.
\end{equation}
\begin{equation}C
=
-\frac{2f'}{f^2}
=
-2\cdot
\frac{\ell\Theta-2x}{\ell x^2}
=
\frac{4x-2\ell\Theta}{\ell x^2}.\end{equation}

\end{proof}

For the metrics under consideration, positivity of holomorphic bisectional curvature is equivalent to
$
A>0,B>0$, and $C>0$.
Similarly, positivity of holomorphic sectional curvature is equivalent to
$
A>0,C>0$, and $2B+\sqrt{AC}>0.
$

\section{Examples Revisited in Polytope Coordinates}\label{sec4}

\subsection{Metrics on \(\mathbb C^n\)}

\begin{theorem}\label{t1}
For $\mathbb C^n$, suppose that $\Theta(x)$ is positive  on $(0,+\infty)$, and extends smoothly to $x=0$ with \begin{equation}\Theta(0)=0, ~\Theta'(0)=2,~ \Theta''(x) < 0~(x \geq 0).\end{equation} Then  the corresponding symplectic potential yields a complete Kähler metric with positive holomorphic bisectional curvature on $\mathbb C^n$.
\end{theorem}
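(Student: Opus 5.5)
The argument has two parts. First we check that the potential is admissible and complete, using the table in Section~\ref{sec3}. Then we check the sign of the curvature components from Proposition~\ref{k=2} with $\ell=1$.

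\textbf{Admissibility and completeness.} The endpoint conditions $\Theta(0)=0$, $\Theta'(0)=2$ and positivity of $\Theta$ on $(0,+\infty)$ are assumed. By the discussion after the table, they give $F(x)=\tfrac12 x\log x+\psi(x)$ with $\psi$ smooth near $0$. Hence $h_1=-\eta(x)+F$ is smooth on $P_1$, and $\delta$ is smooth and positive, so conditions (i)--(iii) of $(\star)$ hold.

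For $J$-completeness near $0$, we have $\Theta(x)\le 2x+O(x^2)$, so $\int_0^1\Theta^{-1}\,dx=+\infty$. At infinity we use concavity. Since $\Theta''<0$, $\Theta'$ is decreasing, so $\Theta(x)\le \Theta(1)+\Theta'(1)(x-1)$ for $x\ge1$.

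We also need $\Theta'\ge 0$ everywhere. Otherwise $\Theta'(x_0)<0$ at some $x_0$, and then $\Theta'\le \Theta'(x_0)<0$ for $x\ge x_0$. This forces $\Theta$ to become negative, contradicting $\Theta>0$. So $\Theta'(1)\ge0$ and $\Theta(x)\le c_1+c_2x$ with $c_1,c_2\ge 0$.

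A linear upper bound gives both $\int_1^\infty\Theta^{-1}\,dx=+\infty$ and $\int_1^\infty\Theta^{-1/2}\,dx=+\infty$, since $(c_1+c_2x)^{-1/2}$ is not integrable. Thus $J$-completeness and $g$-completeness both hold by the table.

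\textbf{Curvature.} By Proposition~\ref{k=2} and the remark after it, it suffices to show $A>0$, $B>0$ and $C>0$ at the representative point for every $x>0$.

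First, $A=-\Theta''>0$ holds by hypothesis.

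For $B$, set $u(x)=\Theta(x)-x\Theta'(x)$. Then $u(0)=0$ and $u'(x)=-x\Theta''(x)>0$ for $x>0$, so $u>0$ on $(0,\infty)$. Hence $B=u/x^2>0$.

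For $C$ with $\ell=1$, we need $2x-\Theta(x)>0$. Set $v(x)=2x-\Theta(x)$. Then $v(0)=0$, $v'(0)=2-\Theta'(0)=0$, and $v''=-\Theta''>0$. So $v'>0$ on $(0,\infty)$, and therefore $v>0$ there. Hence $C>0$.

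\textbf{Main obstacle.} The one step needing care is justifying the reduction of global positivity to the representative point $(x_1,0,\ldots,0)$, that is, the claim that positivity of bisectional curvature is equivalent to $A,B,C>0$. For this we would invoke the cited $U(n)$-invariance argument of Wu--Zheng: every point is $U(n)$-equivalent to a representative point, and in the unitary frame there the curvature tensor has only the listed components. The bisectional curvature $R(X,\bar X,Y,\bar Y)$ then expands as a sum of these components times squared moduli $|X_i|^2|Y_j|^2$ and cross terms $|X_1\bar Y_1 Y_i\bar X_i|$-type contributions. The cross terms are controlled because the coefficient structure ($C/2$ for the mixed $i\neq j\ge2$ terms and $B$ for the mixed $1,i$ terms) makes the form nonnegative, and positive for $X,Y\neq0$, once $A,B,C>0$. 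Beyond this we take the paper's statement of the equivalence as given.

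The behaviour at $x=0$, where the representative point is the origin, follows by continuity and smoothness of $\Theta$ up to $0$. At the origin the limits of $A$, $B$ and $C$ are $-\Theta''(0)$, $-\Theta''(0)/2$ and $-\Theta''(0)$ respectively, all positive.
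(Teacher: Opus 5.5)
Your proposal is correct and follows the paper's proof essentially step for step: $A=-\Theta''>0$ holds by hypothesis, $B>0$ follows from $(\Theta-x\Theta')'=-x\Theta''>0$, $C>0$ follows from $\Theta(x)<2x$, and completeness follows from the integral criteria in the table. The differences are cosmetic. The paper gets the divergence of $\int_1^{\infty}\Theta^{-1}\,dx$ and $\int_1^{\infty}\Theta^{-1/2}\,dx$ directly from $\Theta(x)<2x$, which you prove anyway for $C$, so your detour through $\Theta'\ge 0$ and the tangent line at $x=1$ is unnecessary; your explicit admissibility check and the limiting values at the origin are extra detail the paper leaves implicit.
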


\begin{proof}

The inequality \(A>0\) follows from \(A=-\Theta''\). Since \(\Theta''<0\) and \(\Theta'(0)=2\), we have \(\Theta(x)>x\Theta'(x)\), which gives \(B>0\). The inequality \(\Theta(x)<2x\) gives \(C>0\), and it also implies \(\int_1^\infty \Theta(x)^{-1}\,dx=\infty\) and \(\int_1^\infty \Theta(x)^{-1/2}\,dx=\infty\). Finally, \(\Theta(x)\sim 2x\) near $x=0$ gives \(\int_0^1 \Theta^{-1}dx=+\infty\).

\end{proof}

The above theorem parallels the positivity criteria obtained by Duan--Guan and by Wu--Zheng for \(U(n)\)-invariant K\"ahler metrics on \(\mathbb C^n\); see \cite[Corollary~1]{duan} and \cite[Theorem~2]{zheng}.

\subsection{Metrics on $\mathcal{O}(-\ell)$}

We first observe that no metric within the present ansatz on
\(\mathcal O(-\ell)\) has positive
holomorphic bisectional curvature. Indeed, positive holomorphic bisectional curvature requires
$\Theta(x)-x\Theta'(x)>0.$
Taking the limit \(x\to\lambda^+\) gives
$
0\leq \Theta(\lambda)-\lambda\Theta'(\lambda)=-2\lambda.
$ This contradicts \(\lambda>0\).

We now give sufficient conditions for the metric to have positive holomorphic sectional curvature.
We reformulate the method of Duan and Guan \cite[pp.~87--89]{duan} in action-angle coordinates. It suffices to study the positivity of \(R_{X\bar X X\bar X}\), where \(X=\sum_i \xi_i e_i\) is a unit vector in the normalized frame. 

Set $\tau=|\xi_{1}|^2$. Then
\begin{equation}
\begin{aligned}
R_{X\bar{X}X\bar{X}}
&=A\tau^2+4B\tau(1-\tau)+C(1-\tau)^2 \\
&=-\tau^2\Theta''(x)+\frac{4(\Theta(x)-x\Theta'(x))}{x^2}\tau(1-\tau)+\frac{-2 \ell \Theta(x)+4x}{\ell x^2}(1-\tau)^2.
\end{aligned}\end{equation}

\begin{theorem}
Let $\lambda>0,~ \varepsilon \in (0,2)$. For $\mathcal{O}(- \ell)$, suppose that $\Theta(x)$ is positive  on $(\lambda,+\infty)$, and extends smoothly to $x=\lambda$ with \begin{equation}\Theta(\lambda)=0,~ \Theta'(\lambda)=2,~ \Theta''(x)<0~(x \geq \lambda).\end{equation} If, in addition, the following holds:
\begin{eqnarray} \label{eq:duan1} 
(\lambda - 2x) \Theta''(x) -2  \Theta'(x)>0   \text{ when }  \ell=1,\end{eqnarray}\begin{eqnarray} \label{eq:duan2} 
 \varepsilon(\lambda - 2x) \Theta''(x)-4 \ell  \Theta'(x) >0  \text{ and } \Theta(x)<\frac{2-\varepsilon}{\ell-1}\lambda  \text{ when }   \ell>1, \end{eqnarray}
then the corresponding symplectic potential yields a complete Kähler metric on $\mathcal{O}(- \ell)$ with positive holomorphic sectional curvature.
\end{theorem}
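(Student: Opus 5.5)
The plan is to check the admissibility criterion $(\star)$ and the two completeness conditions from the table in Section~\ref{sec3}, and then to verify the curvature criterion stated after Proposition~\ref{k=2}: $A>0$, $C>0$ and $2B+\sqrt{AC}>0$.

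\textbf{Admissibility and completeness.} Smoothness of $h_2$ and positivity of $\delta$ at the facet $x=\lambda$ follow from $\Theta(\lambda)=0$, $\Theta'(\lambda)=2$. These give $F(x)=\tfrac12(x-\lambda)\log(x-\lambda)+\psi(x)$ with $\psi$ smooth, exactly as explained after the table. Since $\Theta''<0$ and $\Theta'(\lambda)=2$, we have $\Theta'(x)<2$ and $0<\Theta(x)<2(x-\lambda)$ for $x>\lambda$. Near $\lambda$ we have $\Theta\sim 2(x-\lambda)$, so $\int_\lambda^{\lambda+1}\Theta^{-1}\,dx=\infty$. The linear upper bound gives divergence of $\int^\infty\Theta^{-1}\,dx$ and of $\int^\infty\Theta^{-1/2}\,dx$. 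This settles $J$- and $g$-completeness, as in the proof of Theorem~\ref{t1}.

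\textbf{Signs of $A$ and $C$.} $A=-\Theta''>0$ is immediate. For $C>0$ we need $\ell\Theta<2x$; in fact I will prove the stronger bound $4x-2\ell\Theta>2\varepsilon\lambda$ (with $\varepsilon$ replaced by $2$ when $\ell=1$).
\begin{itemize}
\item If $\ell=1$: $4x-2\Theta>4x-4(x-\lambda)=4\lambda$.
\item If $\ell>1$: write $\ell\Theta=\Theta+(\ell-1)\Theta<2(x-\lambda)+(2-\varepsilon)\lambda$. This gives $4x-2\ell\Theta>2\varepsilon\lambda>0$.
\end{itemize}

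\textbf{The mixed condition.} By the formula for $R_{X\bar XX\bar X}$ as a quadratic in $\tau$, positivity is automatic wherever $B\ge 0$. So only points with $B<0$, i.e.\ $x\Theta'-\Theta>0$, need attention, and this forces $\Theta'>0$ there. At such points it suffices to show $AC>4B^2$, that is,
\[
-\Theta''(x)\,(4x-2\ell\Theta)\,x^2>4\ell\,(x\Theta'-\Theta)^2 .
\]
Concavity together with $\Theta(\lambda)=0$ gives $\Theta\ge (x-\lambda)\Theta'$. Hence $0<x\Theta'-\Theta\le\lambda\Theta'$, and it suffices to prove
\[
-\Theta''(4x-2\ell\Theta)x^2>4\ell\lambda^2\Theta'^2 .
\]
Since $\Theta'>0$ here, the hypotheses \eqref{eq:duan1} and \eqref{eq:duan2} say $-\Theta''>\dfrac{4\ell\Theta'}{\varepsilon(2x-\lambda)}$ (with $\varepsilon=2$ when $\ell=1$). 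Combining this with $4x-2\ell\Theta>2\varepsilon\lambda$ bounds the left-hand side below by
\[
\frac{8\ell\lambda\Theta' x^2}{2x-\lambda}.
\]
Now $x^2\ge\lambda(2x-\lambda)$, because $(x-\lambda)^2\ge0$, so this is at least $8\ell\lambda^2\Theta'$. Finally $\Theta'<2$ gives $8\ell\lambda^2\Theta'\ge 4\ell\lambda^2\Theta'^2$, which is the desired inequality.

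\textbf{Expected obstacle.} The main step is this last chain. It hinges on the two elementary reductions $x\Theta'-\Theta\le\lambda\Theta'$ and $x^2\ge\lambda(2x-\lambda)$, and on matching the constant $\varepsilon$ between the hypothesis on $\Theta''$ and the lower bound on $4x-2\ell\Theta$. If the constants fail to close, I would first try tracking the exact quadratic discriminant in $\tau$ rather than the cruder sufficient condition $AC>4B^2$.
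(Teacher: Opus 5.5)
Your proof is correct. It reaches the conclusion by a more direct organization than the paper's.

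The paper works with the translated profile $\Theta_0(x)=\Theta(x+\lambda)$. It splits $\ell x^2R_{X\bar{X}X\bar{X}}$ into two parts:
\begin{itemize}
\item the piece $\ell(x-\lambda)^2(R_1+R_2)+(x-\lambda)^2R_3$, which is the curvature expression of the translated $\mathbb{C}^n$ model and is nonnegative term by term;
\item a remainder that is quadratic in $\tau$, with constant term $2(2\lambda-(\ell-1)\Theta)$.
\end{itemize}
It then imposes the discriminant condition $-2\lambda\ell\Theta'^2+(\lambda-2x)(2\lambda-(\ell-1)\Theta)\Theta''>0$ on the remainder, and weakens it using $\Theta'^2\le2\Theta'$.

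You instead apply the criterion $A>0$, $C>0$, $2B+\sqrt{AC}>0$ directly, and reduce to $AC>4B^2$ at points where $B<0$. The ingredients are the same in disguise:
\begin{itemize}
\item your bound $x\Theta'-\Theta\le\lambda\Theta'$ is the paper's $R_2\ge0$;
\item your bound $4x-2\ell\Theta>2\varepsilon\lambda$ combines $R_3\ge0$ (that is, $\Theta<2(x-\lambda)$) with positivity of the remainder's constant term;
\item your inequality $x^2\ge\lambda(2x-\lambda)$ is the identity $x^2-(x-\lambda)^2=\lambda(2x-\lambda)$ that produces the $\tau^2$-coefficient of the remainder.
\end{itemize}

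The paper's route shows the hypotheses as a perturbation of Theorem~\ref{t1} and isolates an explicit intermediate sufficient condition. Your route is shorter and avoids the shifted profile. Your intermediate inequality $-\Theta''(4x-2\ell\Theta)x^2>4\ell\lambda^2\Theta'^2$ is in fact implied by the paper's discriminant condition: multiply that condition by $2\lambda$ and use $2(2\lambda-(\ell-1)\Theta)\le 4x-2\ell\Theta$ and $\lambda(2x-\lambda)\le x^2$. So your requirement is marginally less demanding, although both routes close with exactly the same hypotheses.

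You also justify $\Theta'>0$ at the relevant points, from $x\Theta'>\Theta>0$. The paper uses $0\le\Theta'\le2$ without comment.
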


\begin{proof}
Set $\Theta_0(x)=\Theta(x+\lambda)$, then $\Theta_0(x)$ is positive on $(0,+ \infty)$, satisfies $\Theta_0''(x)<0$ for \(x\ge0\), and $\Theta_0(0)=0$, $\Theta_0'(0)=2$. By the argument in the proof of Theorem~\ref{t1} applied to \(\Theta_0\), we have
\begin{equation}-\tau^2\Theta_0''(x)+\frac{4(\Theta_0(x)-x\Theta_0'(x))}{x^2}\tau(1-\tau)+\frac{-2\Theta_0(x)+4x}{x^2}(1-\tau)^2>0 .\end{equation}

Note that $\Theta(x)=\Theta_0(x-\lambda)$. Then, for \(x\ge\lambda\), \begin{equation}-\tau^2\Theta''(x)+\frac{4(\Theta(x)-(x-\lambda)\Theta'(x))}{(x-\lambda)^2}\tau(1-\tau)+\frac{-2\Theta(x)+4(x-\lambda)}{(x-\lambda)^2}(1-\tau)^2>0. \end{equation}

In particular,
\begin{equation}
\begin{gathered}R_1=-\tau^2\Theta''(x)\geq 0, \\
R_2=\frac{4(\Theta(x)-(x-\lambda)\Theta'(x))}{(x-\lambda)^2}\tau(1-\tau)\geq 0, \\
R_3=\frac{-2\Theta(x)+4(x-\lambda)}{(x-\lambda)^2}(1-\tau)^2\geq 0.\end{gathered}\end{equation}

To ensure that $R_{X\bar{X}X\bar{X}}$ is positive, it suffices to prove positivity of the following expression:
\begin{equation}\begin{aligned}
&\ell x^2 R_{X\bar{X}X\bar{X}} - \ell(x - \lambda)^2(R_1 +R_2 )- (x - \lambda)^2  R_3\\
=&2(1 - \ell)(1 - \tau)^2 \Theta(x) 
+ \lambda \left( 
4(1 - \tau)^2 + 4\ell(\tau-1)\tau\, \Theta'(x) \right)+ \lambda \ell \tau^2 (\lambda - 2x) \Theta''(x). \end{aligned}\end{equation}

The constant term of this quadratic polynomial in \(\tau\) is
\(2\bigl(2\lambda-(\ell-1)\Theta(x)\bigr)\), which is positive under the assumptions.

The discriminant condition is
\begin{align}-2 \lambda \ell \left(\Theta'(x)\right)^2 
+ (\lambda - 2x)\left(2\lambda + \Theta(x) - \ell \Theta(x)\right) \Theta''(x) >0.\end{align}

Using the relation $0 \leq \Theta'(x)\leq 2$, we obtain a sufficient condition:
\begin{eqnarray}\Xi=\label{a1} -4 \lambda \ell \Theta'(x) 
+ (\lambda - 2x)\left(2\lambda + \Theta(x) - \ell \Theta(x)\right) \Theta''(x) >0 .\end{eqnarray}

It remains to verify the sufficient condition $\Xi>0$.

When \(\ell=1\), 
$
\Xi=-4\lambda\Theta'(x)+2\lambda(\lambda-2x)\Theta''(x)
=
2\lambda\bigl((\lambda-2x)\Theta''(x)-2\Theta'(x)\bigr)>0.
$
Now assume \(\ell>1\). 
We have
$
2\lambda+\Theta(x)-\ell\Theta(x)
=
2\lambda-(\ell-1)\Theta(x)
>
\varepsilon\lambda.
$
Therefore 
$
\Xi >
-4\lambda\ell\Theta'(x)
+\varepsilon\lambda(\lambda-2x)\Theta''(x)  
=
\lambda\bigl(\varepsilon(\lambda-2x)\Theta''(x)-4\ell\Theta'(x)\bigr)>0. 
$ 

Moreover, since \(\Theta''<0\), \(\Theta(\lambda)=0\), and
\(\Theta'(\lambda)=2\), we have \(\Theta(x)<2(x-\lambda)\).
Therefore
$
\int_{\lambda+1}^{\infty}\Theta(x)^{-1}\,dx=\infty,\
\int_{\lambda+1}^{\infty}\Theta(x)^{-1/2}\,dx=\infty,
$ and \(\Theta(x)\sim 2(x-\lambda)\) near $x=\lambda$ gives \(\int_\lambda^{\lambda+1} \Theta^{-1}dx=+\infty\),
so the associated toric complex structure and the metric are complete.
\end{proof}

\subsection{Metrics on $M_{n,\ell}$}

The preceding method does not apply directly to $M_{n,\ell}$, since the required profile $\Theta(x)$ vanishes at both endpoints of an interval and therefore cannot be obtained by a translation of the profiles $\Theta(x)$ associated with $\mathbb{C}^n$ or $\mathcal{O}(-\ell)$.

By analyzing the positivity inequalities
$
A>0,\; C>0,\; 2B+\sqrt{AC}>0,
$
Yang and Zheng constructed a Kähler metric with positive holomorphic sectional curvature on \(M_{n,\ell}\) \cite[Theorem 1.3]{yang}. We show below that the canonical toric Kähler metric on \(M_{n,\ell}\) arising from the Guillemin–Abreu formalism already has positive holomorphic sectional curvature.

\begin{theorem}
In every Kähler class on \(M_{n,\ell}\), the canonical momentum profile
\begin{equation}\Theta(x)
=
\frac{2x(x-a)(b-x)}
{\ell(x-a)(b-x)+x(b-a)}\end{equation} defines a complete Kähler metric with positive holomorphic sectional curvature on \(M_{n,\ell}\), where the parameters $b>a>0$ determine the Kähler class.
\end{theorem}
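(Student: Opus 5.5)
The first step is to explain where the profile comes from, because that settles admissibility and completeness. On $P_3$ the canonical potential is $G^{\mathrm{can}}=\sum_i\eta(\ell x_i)+\eta(x-a)+\eta(b-x)$. Comparing with the ansatz $G=\sum_i\eta(\ell x_i)-\eta(\ell x)+F(x)$, I would take $F=\eta(\ell x)+\eta(x-a)+\eta(b-x)$, i.e.\ $h_3\equiv 0$. Then
\[
\frac1{\Theta}=F''=u(x):=\frac{\ell}{2x}+\frac{1}{2(x-a)}+\frac{1}{2(b-x)}=\frac{\ell(x-a)(b-x)+x(b-a)}{2x(x-a)(b-x)},
\]
which is exactly the stated profile. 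Since $h_3=0$, conditions (i)--(iii) of $(\star)$ hold by \cref{1}, and completeness is automatic because $M_{n,\ell}$ is compact (see the table). As a consistency check, one can verify directly that $\Theta(a)=\Theta(b)=0$, $\Theta'(a)=2$ and $\Theta'(b)=-2$. Varying $b>a>0$ sweeps out all Kähler classes. It then remains to show, via \cref{k=2}, that $A>0$, $C>0$ and $2B+\sqrt{AC}>0$ on $(a,b)$.

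\emph{$C>0$.} Write $u=\frac{\ell}{2x}+g$ with $g=\frac{b-a}{2(x-a)(b-x)}>0$. Then $2xu-\ell=2xg$, which gives
\[
C=\frac{2(2x-\ell\Theta)}{\ell x^2}=\frac{4g}{\ell x u}>0 .
\]
\emph{$A>0$.} We have $\Theta''=(2u'^2-uu'')/u^3$. Each summand of $u$ has the form $f_k=c_k/L_k$ with $L_k$ linear, so $f_k''=2f_k'^2/f_k$. By Cauchy--Schwarz,
\[
uu''=\Bigl(\sum f_k\Bigr)\Bigl(\sum 2f_k'^2/f_k\Bigr)\ge 2\Bigl(\sum|f_k'|\Bigr)^2\ge 2u'^2 .
\]
Equality would force the ratios $f_k'/f_k=-L_k'/L_k$ to coincide, which does not happen for three distinct linear forms. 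Hence $\Theta''<0$ and $A>0$.

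\emph{The mixed term.} $B$ can be negative; near $x=a$ it tends to $-2/a$, the same obstruction as for $\mathcal O(-\ell)$. So I would prove $4B^2<AC$ wherever $B<0$. Using $(x\cdot\ell/(2x))'=0$, I get $u+xu'=(xg)'$, and therefore
\[
B=\frac{(xg)'}{x^2u^2}.
\]
Substituting $A=(uu''-2u'^2)/u^3$ and the formula for $C$ above, the inequality $4B^2<AC$ becomes
\[
\ell x\,\bigl((xg)'\bigr)^2<g\,u\,(uu''-2u'^2)\,x^2 .
\]
Everything here is rational in $x$. I would clear denominators by multiplying by a suitable power of $x(x-a)(b-x)$, which should leave a polynomial inequality on $(a,b)$. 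I would then substitute $x=a+s$, $b-x=t$ with $s,t>0$ (so $b-a=s+t$) and aim to show that the resulting polynomial in $s,t,a,\ell$ has only nonnegative coefficients. A simpler route, if it works, is the sharper Cauchy--Schwarz bound
\[
uu''-2u'^2\ge \frac{2\bigl(\sum|f_k'|\bigr)^2-2u'^2}{1},
\]
which makes the right-hand side explicit in terms of the three derivatives $|f_k'|$.

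The main obstacle is this last inequality $2B+\sqrt{AC}>0$. If the coefficient check fails globally, my fallback is to split $(a,b)$ into the region where $(xg)'\ge 0$, where $B\ge0$ and the inequality is trivial, and the region where $(xg)'<0$, which lies near $x=a$. On the second region I would use $g\ge (b-a)/(2(x-a)(b-a))$-type bounds together with the dominant $1/(x-a)$ behaviour of $u$ to control $(xg)'^2$ by $g\,u\,(uu''-2u'^2)$.
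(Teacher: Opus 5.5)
There is a genuine gap. The only nontrivial step, $2B+\sqrt{AC}>0$ where $B<0$, is planned but not proved, and the reduction you display for it is wrong. Your ingredients $A=(uu''-2u'^2)/u^3$, $C=4g/(\ell xu)$ and $B=(xg)'/(x^2u^2)$ are correct, but they give
\[
4B^2<AC\iff \ell\bigl((xg)'\bigr)^2<x^3\,g\,(uu''-2u'^2),
\]
i.e.\ $\ell x((xg)')^2<x^4g(uu''-2u'^2)$. Your inequality has $x^2u$ in place of $x^4$. Your version is not invariant under $(a,b,x)\mapsto(\lambda a,\lambda b,\lambda x)$, whereas the curvature condition is. It is in fact false: for $\ell=1$, $a=10$, $b=20$, $x=12$ its left side is about $14.4$ and its right side about $0.26$. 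So the coefficient check on it must fail, and your fallback aims at the same wrong target.

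Even after this correction, your shortcuts would not close the gap. The ``sharper'' Cauchy--Schwarz bound is just the old one. It gives only $uu''-2u'^2\ge \frac{2}{(b-x)^2}\bigl(\frac{\ell}{x^2}+\frac{1}{(x-a)^2}\bigr)=O\bigl((x-a)^{-2}\bigr)$. The true size is of order $(x-a)^{-3}$, since the $(x-a)^{-4}$ parts of $uu''$ and $2u'^2$ cancel, while $\ell((xg)')^2\sim\ell a^2/\bigl(4(x-a)^4\bigr)$. So this bound fails near $x=a$. Bounds based on the dominant $1/(x-a)$ behaviour cannot work either, because $AC/(4B^2)\to1+\frac{a}{\ell(b-a)}$ as $x\to a^+$, which can be arbitrarily close to $1$.

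The missing step is the exact computation, which is the paper's argument:
\[
(xg)'=\frac{(b-a)(x^2-ab)}{2(x-a)^2(b-x)^2},\qquad uu''-2u'^2=\frac{(b-a)N}{2x^3(x-a)^3(b-x)^3},
\]
with $N=\ell\bigl(a^2b^2-3abx^2+(a+b)x^3\bigr)+(b-a)x^3$. The corrected inequality is then exactly $\ell(ab-x^2)^2<N$. Moreover $N-\ell(ab-x^2)^2=x^2\bigl(\ell(x-a)(b-x)+(b-a)x\bigr)$, which in your variables $x=a+s$, $b-x=t$ is $x^2\bigl(\ell st+(s+t)(a+s)\bigr)>0$. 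So your substitution strategy does succeed once the reduction is fixed and actually carried out.

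The rest of your proposal is sound. $h_3=0$ gives admissibility and completeness, and your formula for $C$ is right. Your Cauchy--Schwarz proof that $\Theta=1/u$ is strictly concave is a cleaner alternative to the paper's explicit $A=4(b-a)N/L^3$ (with $L=\ell(x-a)(b-x)+(b-a)x$) followed by minimizing $\phi$. Strictness already follows from the triangle-inequality step, since $f_3'>0>f_1'$. The paper's route has the advantage that the same polynomial $N$ then settles the mixed term.
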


\begin{proof}
In the notation used above, the potential for \(M_{n,\ell}\) is
\begin{equation}G(x)=\frac{1}{2}\sum^{n}_{i=1}\ell x_{i}\log(\ell x_{i})
+\frac12(x-a)\log(x-a)
+\frac12(b-x)\log(b-x),
\end{equation}
which is exactly the canonical potential.

Let \(L(x)=\ell(x-a)(b-x)+(b-a)x\) and $N=\ell\bigl(a^2b^2-3abx^2+(a+b)x^3\bigr)+(b-a) x^3$. By the curvature criterion, it is enough to verify
\[
\left\{
\begin{aligned}
A
&=-\Theta''
=\frac{4(b-a)}{L(x)^3}N>0,\\
C
&=\frac{4x-2\ell\Theta}{\ell x^2}
=\frac{4(b-a)}{\ell L(x)}>0,\\
2B+\sqrt{AC}
&=\frac{2\Theta-2x\Theta'}{x^2}+\sqrt{\frac{-\Theta''(4x-2\ell\Theta)}{\ell x^2}}=
\frac{4(b-a)}{L(x)^2}
\left(
x^2-ab+\sqrt{\frac{N}{\ell}}
\right)>0.
\end{aligned}
\right.
\]

Since \(L(x)>0\) for \(a<x<b\), we have  \(C>0\). To prove \(A>0\), set 
$
\phi(x)=a^2b^2-3abx^2+(a+b)x^3
$
, then 
$
\phi'(x)=3x((a+b)x-2ab)
$, 
so $\phi(x)\geq \phi(\frac{2ab}{a+b})=\frac{a^2 b^2(b-a)^2}{(a+b)^2}>0$.
Hence \(A>0\).
It remains to check \(2B+\sqrt{AC}>0\). It is equivalent to
\begin{equation}
x^2-ab+\sqrt{\frac{N}{\ell}}>0.
\end{equation}
This is immediate when $x \geq \sqrt{ab}$. If $a < x< \sqrt{ab}$, then a direct simplification gives,
\begin{equation}
\frac{N}{\ell}-(ab-x^2)^2
=
x^2\left((x-a)(b-x)+\frac{b-a}{\ell}x\right)>0.
\end{equation}
Therefore \(2B+\sqrt{AC}>0\).
\end{proof}

\bibliographystyle{unsrt}
\bibliography{sample}

\end{document}